\title{Embedding universal covers of graph manifolds in products of trees}
\author{David Hume and Alessandro Sisto}
\newtheorem{lemma}{Lemma}
\newtheorem{thm}[lemma]{Theorem}
\newtheorem{cor}[lemma]{Corollary}
\newtheorem{qu}[lemma]{Question}
\newcommand{\tilM}{\ensuremath{{\widetilde{M}}}}
\newcommand{\g}{\gamma}
\newcommand{\R}{\mathbb{R}}
\newcommand{\N}{\mathbb{N}}
\newcommand{\matH}{\mathbb{H}}
\address{Mathematical Institute, 24-29 St Giles, Oxford OX1 3LB, United Kingdom}
\email{hume@maths.ox.ac.uk, sisto@maths.ox.ac.uk}
\begin{document}

\begin{abstract}
We prove that the universal cover of any graph manifold quasi-isometrically embeds into a product of three trees. In particular we show that the Assouad-Nagata dimension of the universal cover of any closed graph manifold is $3$, proving a conjecture of Smirnov.
\end{abstract}
\maketitle

A graph manifold is a compact connected $3$-manifold (possibly with boundary) which admits a decomposition into Seifert fibred pieces, when cut along a collection of embedded tori and/or Klein bottles. In particular a graph manifold is a $3$-manifold whose geometric decomposition admits no hyperbolic part. For this reason the class of graph manifold groups is rigid within the class of $3$-manifold groups \cite{KL-qiclasses}, moreover, such groups are classified up to quasi-isometry \cite{BN09}. 
\par 
More details on graph manifolds and proofs of the above results can be found in \cite{BDM09}, \cite{Ge94} and \cite{KL98}. 
\par
We show the following:

\begin{thm} \label{gm} \hspace{3mm}
 The universal cover of any graph manifold quasi-isometrically embeds in the product of three metric trees.
\end{thm}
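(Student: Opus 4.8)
The plan is to realise $\tilM$ as a tree of spaces and to build the three trees by amalgamating model trees for the individual pieces. First I would record the standard geometric model: $\tilM$ projects onto the Bass--Serre tree $T$ of the graph-of-groups decomposition, the vertex spaces are the universal covers of the Seifert pieces, each quasi-isometric to $\matH^2\times\R$ (the $\R$-factor being the fibre direction and $\matH^2$ the hyperbolic base), and adjacent vertex spaces are glued along flats $\R^2$. The crucial feature is the \emph{flip}: along a gluing flat the fibre $\R$-direction of one piece is identified with a base geodesic of the neighbour, and vice versa. Since the property of quasi-isometrically embedding in a product of three trees is a quasi-isometry invariant, I may first reduce (via \cite{KL98}, \cite{BN09}) to a convenient standard model, and I may dispose of the Seifert (geometric) case directly, since $\matH^2\times\R$ embeds in a product of three trees once we know that $\matH^2$ embeds in a product of two.

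For the building block I would use the result of Buyalo--Dranishnikov--Schroeder that $\matH^2$ quasi-isometrically embeds in a product of two trees, together with the trivial embedding of the fibre $\R$ into a tree, so that each piece embeds in a product of three trees; the whole point is to reuse the \emph{same} three global trees $T_1,T_2,T_3$ for every piece. The flip tells us exactly how the roles may be distributed: on a gluing flat the shared direction which is the fibre of one side is a base geodesic of the other, so if we declare which single tree carries each piece's fibre direction, the only constraint is that two adjacent pieces must use different trees for their fibres. Thus a proper colouring of $T$, where the colour names the fibre tree, suffices, and such colourings exist in abundance since $T$ is a tree. With one fixed, on each flat the two Euclidean directions are carried by the two trees corresponding to the fibre-colours of its two sides, the third tree being constant along the flat; the $\ell^1$-metric of the flat is then recovered by those two tree-metrics, consistently from both sides.

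Next I would assemble the global trees by grafting. For a piece whose fibre-colour is $i$, its fibre lines contribute to $T_i$, while its hyperbolic base is embedded using the other two trees as factors; a neighbouring piece is then grafted along the relevant gluing geodesic, attaching its model trees to those already built. Iterating over $T$ produces three trees into which $\tilM$ maps, and the upper bound (that tree-distances do not exceed $d_{\tilM}$ up to constants) should follow piece by piece, since the map is Lipschitz on each piece and the pieces overlap precisely along the flats.

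The main obstacle is twofold, and lies in making this coherent and in the lower bound. First, the embedding of each base $\matH^2$ into its two trees must be chosen so that the gluing geodesics are \emph{aligned} with the tree factors in the way dictated by the neighbours' fibre-colours; a general geodesic in $\matH^2$ is ``V-shaped'' across the two tree factors, so realising the $\pi_1$-invariant family of gluing geodesics as axis-parallel lines, compatibly with the colouring and with the branching forced by the exponential spreading of the hyperbolic metric, is delicate. Second, and most seriously, one must prove the lower bound: that for $p,q\in\tilM$ the sum of the three tree-distances is bounded below by a fixed multiple of $d_{\tilM}(p,q)$. This requires controlling how a geodesic of $\tilM$ distributes its length among the pieces it crosses and along the flats it traverses---in effect a distance formula for graph manifolds in the spirit of \cite{BN09}---and checking that the accumulation of flips along long paths in $T$ does not degrade the quasi-isometry constants. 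I expect this global metric estimate, rather than the construction of the trees, to be the heart of the proof.
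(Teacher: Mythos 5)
Your high-level skeleton --- a parity colouring of the Bass--Serre tree, using the flip to decide which tree carries which direction of each gluing flat, and grafting per-piece model trees into global trees --- is genuinely close in spirit to the actual proof. But there are two concrete gaps, and the first is what makes the second intractable in your setup. Your geometric model is wrong in a way that matters: after cutting along the JSJ tori, each Seifert piece fibres over a compact surface \emph{with non-empty boundary}, so its universal cover is quasi-isometric to $(\text{tree})\times\R$, not to $\matH^2\times\R$ (the universal cover $F_v$ of such a surface is a fattened tree, in particular infinite-ended). Concretely, there is a tree $T_v\subseteq F_v$ and a retraction $r_v:F_v\to T_v$ moving every point a uniformly bounded distance $\mu$, and $r_v$ is injective on each boundary line of $F_v$. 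This single fact dissolves the ``alignment problem'' you correctly flag as delicate but do not solve: each base needs only \emph{one} tree factor, and the gluing lines are automatically embedded lines in that tree, so there is no need for a Buyalo--Dranishnikov--Schroeder embedding of $\matH^2$, nor any need to force an equivariant family of geodesics to be axis-parallel (which a BDS-type embedding gives you no way to do --- geodesics there really are V-shaped, as you note). With one tree per base the bookkeeping also changes: the paper takes $T_0$ to be the Bass--Serre tree itself as the third factor, and builds $T_1,T_2$ by gluing together the base-trees $T_v$ over the two colour classes, identified along gluing lines using the flip, so that the fibre coordinate of a piece of colour $i$ is recorded in $T_{3-i}$ through its neighbours' bases.

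Second, you leave the lower bound --- by your own admission the heart of the matter --- entirely open, and your choice of factors makes it harder than it needs to be. The paper proves it by an explicit construction: given $x,y$ and a geodesic $\delta$ in the product of trees joining $f(x)$ to $f(y)$, one cuts the projections $\delta_1,\delta_2$ into subpaths $\alpha_j$ lying over the consecutive pieces $v_0,\dots,v_n$ crossed, lifts each $\alpha_j$ to a horizontal path $\alpha_j\times\{t_j\}$ in $X_{v_j}$ at the correct fibre height $t_j$, and concatenates these with $n$ jumps of length at most $2\mu$, getting $l(\gamma)\le l(\delta_1)+l(\delta_2)+2n\mu$. The error term $2n\mu$ is then absorbed precisely because the Bass--Serre factor gives $d(f_0(x),f_0(y))\geq n-2$. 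In your scheme the Bass--Serre tree is not one of the three factors (all three trees carry fibre or base data), so no coordinate grows with the number of pieces crossed, and nothing obvious absorbs the per-piece errors that accumulate along long paths in $T$; the ``distance formula in the spirit of \cite{BN09}'' you appeal to is not something you can cite --- it is exactly what you would have to prove, and the paper's path construction is the elementary way of doing so.
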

One may wish to compare this theorem with the result by Buyalo and Schroeder \cite{BS05} that $\matH^3$ can be quasi-isometrically embedded in the product of three infinite valence simplicial trees. (This was refined to three infinite binary trees by \cite{BDS07}.)
\par
As an application, we determine the Assouad-Nagata dimension ($dim_{AN}$) - as defined by Assouad, \cite{As82} - of the universal cover of closed graph manifolds. We denote the asymptotic Assouad-Nagata dimension by $asdim_{AN}$. Recall that the Assouad-Nagata dimension bounds from above the asymptotic dimension, first introduced by Gromov in \cite{Gr93}. However, asymptotic dimension and asymptotic Assouad-Nagata dimension can differ radically, see for instance the examples in \cite{BDU06}. The asymptotic Assouad-Nagata dimension of a group also bounds from above the dimension of its asymptotic cones \cite{DyHi} and if a group has finite Assouad-Nagata dimension then it has compression exponent $1$ \cite{Gal}.
\par
The asymptotic dimension of universal covers of closed graph-manifolds is known to be $3$, as mentioned in \cite{Sm10}, in view of results in \cite{BD08}. Also, Smirnov \cite{Sm10} showed that their Assouad-Nagata dimension is finite (at most $7$) and conjectured that it actually equals $3$. Theorem \ref{gm} implies his conjecture:

\begin{cor}\label{AN-dim} \hspace{3mm}
 If $\tilM$ is the universal cover of a closed graph-manifold then
$$dim_{AN} \tilM=asdim_{AN} \tilM=3.$$
\end{cor}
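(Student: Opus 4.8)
The plan is to realise both $dim_{AN}\tilM$ and $asdim_{AN}\tilM$ as the middle terms of a chain of inequalities pinned between $3$ and $3$. The lower bound is already in hand: the asymptotic dimension of $\tilM$ is known to equal $3$ (see \cite{Sm10} and \cite{BD08}), and since asymptotic dimension is always dominated by asymptotic Assouad--Nagata dimension, which in turn is dominated by Assouad--Nagata dimension, we obtain
$$3 = asdim\,\tilM \le asdim_{AN}\tilM \le dim_{AN}\tilM.$$
It therefore suffices to prove the matching upper bound $dim_{AN}\tilM \le 3$, as then every term in the display collapses to $3$.

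For the upper bound I would feed Theorem \ref{gm} into two structural properties of Assouad--Nagata dimension. First, a metric tree has Assouad--Nagata dimension $1$, and the product theorem for Assouad--Nagata dimension gives $dim_{AN}(T_1\times T_2\times T_3)\le dim_{AN}T_1+dim_{AN}T_2+dim_{AN}T_3=3$ for any three metric trees $T_i$. Second, Assouad--Nagata dimension does not increase on passing to a subspace and is invariant under quasi-isometry; since a quasi-isometric embedding identifies $\tilM$, up to quasi-isometry, with a subspace of $T_1\times T_2\times T_3$, one would hope to transfer the bound and conclude $dim_{AN}\tilM\le 3$.

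The main obstacle lies precisely in this last step, because Assouad--Nagata dimension — unlike its asymptotic cousin — is sensitive to arbitrarily small scales, whereas a quasi-isometric embedding controls distances only up to an additive error and thus badly distorts the small-scale geometry. For the asymptotic version there is no difficulty: $asdim_{AN}$ is a genuine quasi-isometry invariant and the product theorem holds for it, so the argument above runs verbatim and yields $asdim_{AN}\tilM=3$ immediately. To recover the full (non-asymptotic) statement I would exploit that $\tilM$ is the universal cover of a \emph{closed} manifold, hence a geodesic space which, by compactness of the quotient, is uniformly bi-Lipschitz to a Euclidean $3$-ball at small scales; its local Assouad--Nagata dimension is therefore $3$, and its total Assouad--Nagata dimension is already known to be finite by Smirnov's bound \cite{Sm10}. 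Splicing this uniformly controlled small-scale behaviour to the large-scale bound $asdim_{AN}\tilM\le 3$ — via a scale-combination theorem for Assouad--Nagata dimension that glues the small-scale and large-scale covers into a single cover valid at all scales — should give $dim_{AN}\tilM\le 3$, closing the chain. Carrying out this gluing carefully, so that multiplicity and the linear diameter control are preserved across the transition scale, is the part I expect to require the most attention.
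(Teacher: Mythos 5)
Your lower bound and your treatment of the asymptotic Assouad--Nagata dimension coincide exactly with the paper's proof: both arguments run $3=asdim\,\tilM\le asdim_{AN}\tilM\le asdim_{AN}(T_1\times T_2\times T_3)\le 3$, using the results of \cite{LS05} on products of trees and on monotonicity of $asdim_{AN}$ under quasi-isometric embeddings, together with Theorem \ref{gm}. Where you genuinely diverge is the remaining equality $dim_{AN}\tilM=3$. The paper dispatches it in one line by asserting that $dim_{AN}\tilM\le asdim_{AN}\tilM$ holds ``by definition''; you instead work with the convention (the standard one in \cite{LS05} and most of the literature) under which the definitional inequality goes the other way, $asdim_{AN}\le dim_{AN}$, because the non-asymptotic notion constrains all scales rather than only large ones. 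With that convention your observation is exactly right: a quasi-isometric embedding says nothing about small scales, so Theorem \ref{gm} alone cannot bound $dim_{AN}\tilM$, and some local input about $\tilM$ is required. Your route --- bounded geometry of the universal cover of a closed manifold gives dimension-$3$ control at small scales, which is then combined with the large-scale bound --- is the honest way to close this step under the standard conventions, and it is genuinely different from the paper's one-liner, which is convention-dependent and leaves the small-scale check implicit.

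Two corrections to how you propose to execute it. First, the ``scale-combination theorem'' you expect to be the delicate part is not needed at all: $dim_{AN}X\le n$ asks, for each scale $s>0$ \emph{separately}, for some cover with $s$-multiplicity at most $n+1$ and mesh at most $Cs$; covers at different scales need not be compatible with one another. So you may simply use the bounded-geometry covers for $s\le s_0$ and the covers witnessing $asdim_{AN}\tilM\le 3$ for $s\ge s_0$, taking the maximum of the two linear constants (any intermediate window of scales is handled by the cover at scale $s_0$, at the cost of enlarging the constant). There is no transition-scale issue to worry about. Second, the step that does need care is the one you wave through: knowing that every small ball of $\tilM$ is uniformly bi-Lipschitz to a Euclidean ball does not immediately produce, for each small $s$, a single cover of all of $\tilM$ with multiplicity $4$ and mesh $O(s)$ --- naively overlaying the chart-by-chart Euclidean covers multiplies the multiplicity by the multiplicity of the atlas. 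This is repaired by a finite-coloring/union argument, or by quoting the known fact that a complete Riemannian $n$-manifold of bounded geometry has Nagata dimension $n$. Finally, Smirnov's finiteness bound $dim_{AN}\tilM\le 7$ from \cite{Sm10} does you no good here: it only yields multiplicity-$8$ covers at small scales, and no splicing can reduce multiplicity; what is needed is precisely the local dimension-$3$ statement.
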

\begin{proof}
 Asymptotic dimension never exceeds either of the aforementioned dimensions, so this provides the lower bound of $3$ in both cases, also, by definition $dim_{AN} \tilM\leq asdim_{AN} \tilM$. Results in \cite{LS05} prove $asdim_{AN} X\leq n$ when $X$ is an $n-$fold product of trees and $asdim_{AN} A\leq asdim_{AN} B$ whenever $A$ admits a quasi-isometric embedding into $B$, so we get the upper bound using Theorem \ref{gm}.
\end{proof}

A graph manifold is said to be \emph{non-geometric} if its decomposition into Seifert fibred pieces is non-trivial. Notice that if the decomposition is trivial then the universal cover is quasi-isometric to the product of a tree with $\R$.

\begin{qu}
 Can fundamental groups of non-closed, non-geometric graph manifold quasi-isometrically embed into a product of $2$ trees\textup{?}
\end{qu}

\subsection*{Proof of Theorem \ref{gm}} \hspace{3mm}
 We only have to consider non-geometric \emph{flip} graph manifolds. In fact - at the level of universal covers - any graph manifold is quasi-isometric to a flip graph manifold \cite{KL98}. We do not need the definition of such manifolds, as we will recall the essential properties required. Let $M$ be a flip graph manifold and let $T$ be its Bass-Serre tree. The universal cover $\tilM$ of $M$ is constructed by suitably gluing certain metric spaces $X_v=F_v\times\R$, for $v$ a vertex in $T$. Each $F_v$ is the universal cover of a compact surface with non-empty boundary and so it admits a metric retraction $r_v:F_v\to T_v$, where $T_v\subseteq F_v$ is a tree, with the further properties that $r_v$ is injective when restricted to any boundary component of $F_v$ and there exists $\mu$ (not depending on $v$) such that for each $x\in F_v$ we have $d_{F_v}(x,r_v(x))\leq \mu$. Finally, the gluings are performed as follows. Let $v,v'$ be adjacent vertices. Then there exist parametrisations $\gamma_v:\R\to F_v, \gamma_{v'}:\R\to F_{v'}$ of boundary components of $F_v, F_{v'}$ so that $(\gamma_{v}(t),u)\in F_{v}\times\R$ is identified with $(\gamma_{v'}(u),t)\in F_{v'}\times\R$ for each $t,u\in\R$. This is explained, for example, in \cite{BN09}.
\par
 \noindent {\bf Step 1. The trees.} \par
The first tree will just be the Bass-Serre tree $T_0=T$. Let us define the other two trees $T_1, T_2$ as follows. \par
We can subdivide the vertices of $T$ into disjoint families $V_1, V_2$ such that if $v,v'\in V_i$ then $d_T(v,v')$ is even. Set $T'_i=\bigsqcup_{v\in V_i} T_v$. We wish now to define an equivalence relation $\sim$ on $T'_i$, and we will set $T_i=T'_i/_\sim$. Suppose that $v,v'\in V_i$, $v\neq v'$ and there exists $w$ such that $d_T(v,w)=d_T(v',w)=1$. We will set $x\sim_d x'$, for $x\in T_v$, $x'\in T_{v'}$, if there exist $y,y'$ with $r_v(y)=x,r_{v'}(y')=x'$ such that the points in $F_{w}\times\R$ identified with $(y,0)\in F_v\times\R$, $(y',0)\in F_{v'}\times \R$ have the same $\R-$coordinate. To ensure an equivalence relation, we set $\sim$ to be the transitive closure of $\sim_d$.
\par
It is very easy to check that $T_i=T'_i/_\sim$ is a metric tree with only countably many branching points. In fact, it can be described as the increasing union of metric spaces $\{X_k\}_{k\in\N}$ such that $X_0$ is a tree and $X_{k+1}$ is obtained from $X_k$ by identifying a line in $X_k$ with a line in some tree.
\par
 \noindent {\bf Step 2. The components of the embedding.}\par
Define $f_0:\tilM\to T_0$ to be any map such that for all $x\in \tilM$, $x\in F_{f_0(x)}\times\R$ and define $f_i:\tilM\to T_i$ as follows. For each $v$, we let $\pi_v:F_v\times \R\to F_v$ be the projection on the first factor, and as usual denote the equivalence classes of $\sim$ with square brackets. \par If $x\in F_{v}\times\R$ for some $v\in V_i$, then set $f_i(x)=[r_v(\pi_{v}(x))]$. Otherwise we have $x\in F_w\times\R$ for $w\notin V_i$. Let $v\in V_i$ be any vertex such that $d_T(v,w)=1$. Set $f_i(x)=[p]$ where $p\in T_{v}$ is such that $(p,0)$ has, as a point in $F_{w}\times\R$, the same $\R-$coordinate as $x$. This does not depend on the choice of $v$, by the equivalence relation.
\par
 \noindent {\bf Step 3. The product map is a quasi-isometric embedding.}\par
 Define $f:\tilM\to \prod T_i$ to be $\prod f_i$. We wish to show that $f$ is a quasi-isometric embedding. The easier inequality is $d(f(x),f(y))\leq K d(x,y)+C$: the maps $\pi_{v}$ and $r_v$ are non-expanding, so $f_1$ and $f_2$ are readily checked to be $1-$Lipschitz, while $f_0$ satisfies $d_{T_0}(f_0(x),f_0(y))\leq d_\tilM(x,y)/\rho+1$ where
$$0<\rho=\inf \{d_\tilM(x,x'):x\in X_{v},x'\in X_{v'}, d_{T_0}(v,v')=2\}.$$
For the other inequality we start with a geodesic $\delta$ in $\prod T_i$ connecting $f(x)$ to $f(y)$ and construct a path $\g$ in $\tilM$ connecting $x$ to $y$ such that $l(\g)\leq K l(\delta)+C$. Let $\delta_1, \delta_2$ be the projections of $\delta$ on the factors. One may wish to compare the paths we obtain in this way with the ``special paths'' described in \cite{Si11}. 
\par
Suppose that $x\in X_{v_0}$, $y\in X_{v_n}$ and let $v_0,\dots,v_n$ be the vertices of $T$ in the geodesic connecting $v_0$ to $v_n$. For $j=0,\dots, n$ let $i(j)\in\{1,2\}$ be such that $v_j\in V_{i(j)}$ and choose $\alpha_j\subseteq \delta_{i(j)}$ so that $\alpha_j\subseteq \left[r_{v_j}\left(F_{v_j}\right)\right]$. We will also require that the final point of $\alpha_j$ is the starting point of $\alpha_{j+2}$, that the starting point of $\alpha_0$ is $f_{i(0)}(x)$ and that the final point of $\alpha_n$ is $f_{i(n)}(y)$. This can be easily arranged using the fact that each $[r_v(F_v)]$ is convex in the corresponding $T_i$.
\par
For $j=0,\dots, n-1$, let $t_j$ be the $\R-$coordinate as a point in $F_{v_j}\times\R$ of $(p_j,0)\in F_{v_{j+1}}\times \R$, where $p_j$  is the starting point of $\alpha_{j+1}$. Also, let $t_n$ be the $\R-$coordinate of $y\in F_{v_n}\times\R$.
\par
For $j=0,\dots,n$ let $\gamma_j$ be the path $\alpha_j\times t_j$ in $X_{v_j}$. Notice that the distance between the final point of $\gamma_j$ and the starting point of $\gamma_{j+1}$ is at most $2\mu$. So, we can concatenate in a suitable order the $\gamma_j$'s and $n$ geodesics of length at most $2\mu$ to obtain a path $\gamma$ from $x$ to $y$. Clearly $l(\gamma_j)=l(\alpha_j)$ so 
$$l(\g)\leq \sum l(\g_j)+2n\mu=l(\delta_1)+l(\delta_2)+2n\mu=$$
$$d(f_1(x),f_1(y))+d(f_2(x),f_2(y))+2n\mu.$$
As $d(f_0(x),f_0(y))\geq n-2$ we have 
$$l(\g)\leq d(f_1(x),f_1(y))+d(f_2(x),f_2(y))+2\mu d(f_0(x),f_0(y))+4\mu,$$
and we are done. \qed

\bibliographystyle{alpha}
\bibliography{DB}

\end{document}